\newtheorem{theorem}{Theorem}%[\arabic{section}]
\newtheorem{proposition}[theorem]{Proposition}
\newtheorem{hyp}[theorem]{Assumption}
\newtheorem{remark}[theorem]{Remark}
\newtheorem{problem}[theorem]{Problem}
\newcommand{\mendth}{\hfill \ensuremath{\vartriangle}}
\DeclareMathOperator*{\col}{col}
\def\E{\mathbb{E}}
\def\d{\mathrm{d}}
\newcommand{\pushright}[1]{\ifmeasuring@#1\else\omit\hfill$\displaystyle#1$\fi\ignorespaces}
\newcommand{\pushleft}[1]{\ifmeasuring@#1\else\omit$\displaystyle#1$\hfill\fi\ignorespaces}
\title{Integral population control of a quadratic dimerization process}
\author{Corentin Briat and Mustafa Khammash
\thanks{Corentin Briat and Mustafa Khammash are with the Department of Biosystems Science and Engineering {(D-BSSE)}, Swiss Federal Institute of Technology--Z\"{u}rich {(ETH--Z)}, Mattenstrasse 26, 4058 Basel, Switzerland; email: {\tt  \{corentin.briat,mustafa.khammash\}@bsse.ethz.ch}; url: \protect\url{http://www.bsse.ethz.ch/ctsb}, \protect\url{http://www.briat.info}, \protect\url{http://www.bsse.ethz.ch/research/Professors/khammash\textunderscore cv}}}%; url:~{\tt http://www.briat.info}
\begin{document}
\maketitle
\vspace{-2cm}
\begin{abstract}
Moment control of a simple quadratic reaction network describing a dimerization process is addressed. It is shown that the moment closure problem can be circumvented without invoking any moment closure technique. Local stabilization and convergence of the average dimer population to any desired reference value is ensured using a pure integral control law. Explicit bounds on the controller gain are provided and shown to be valid for any reference value. As a byproduct, an explicit upper-bound of the variance of the monomer species, acting on the system as unknown input due to the moment openness, is obtained. The obtained results are illustrated by an example relying on the simulation of a cell population using stochastic simulation algorithms.
\end{abstract}

\begin{keywords}
Stochastic reaction networks; moment equations; population control; integral control; moment closure.
\end{keywords}

\section{Introduction}

Stochastic reaction networks are important modeling tools used in biology to mathematically represent, among others, chemical reaction networks where different interacting species, such as genes, mRNAs or proteins, are involved. Deterministic reaction networks \cite{Feinberg:72,Horn:72} have been extensively considered to model such networks until it has been more recently pointed out that randomness plays a dominant role when species are in low copy numbers, which is quite often the case in biological systems. It has been indeed noticed in \cite{Elowitz:02} that identical cells may exhibit a dramatically different quantitative behavior, emphasizing then the preponderance of random effects, or \emph{intrinsic noise} \cite{Elowitz:02}. Several studies showed that noise plays an important role by allowing life to achieve certain functions that would be difficult or impossible to realize in a deterministic setting; see for instance the stochastic circadian clock model of \cite{Barkai:99} or the Pap pili epigenetic switch of \cite{Jarboe:04}. On the other hand, noise induces variability in population levels and makes regulation tasks more difficult to achieve. But, life evolved and optimized regulation circuits to make them robust against noise.

It has been shown in \cite{Khammash:11} that, in a population of genetically engineered light-responding yeast cells, it was possible to control the average protein levels in cell populations by externally acting on gene expression rates using light. Based on a dynamical model describing the average populations of mRNA and protein molecules of a gene expression network, the control problem has been solved using Kalman filtering and model-predictive control. This approach has then been extended to mean and variance control in \cite{Briat:12c} using elementary integral control laws. It is notably shown there that fundamental limits, in terms of achievable mean and variance, are imposed by the network topology and cannot be overstepped by any computer-based control technique. Single cell control and population control have also been implemented using microfluidics and model predictive control in \cite{Uhlendorf:12}.

The above approaches have to be contrasted with control law implementations relying on synthetic biology where controllers are implemented inside cells, therefore consisting themselves of biologically interacting species, see e.g. \cite{Klavins:10,Oishi:10b}. Inner and outer control approaches are fully complementary since synthetic control networks are limited in terms of  flexibility and computational power, whereas outer control is not. Synthetic controllers are, on the other hand, able to consistently modify the structure of the controlled network which may enhance its noise reduction properties; e.g. through variance reduction.

The goal of this paper is to go beyond stochastic reaction networks with affine propensities for population control \cite{Khammash:11,Briat:12c}, and emphasize that a more general theory encompassing a wider class of networks, notably with quadratic propensities, might be possible to develop using moments equations. It is indeed well known that when quadratic propensities are involved, the moment closure problem arises \cite{Singh:11,Gillepsie:09,Milner:11}, and seems to compromise then the entire framework based on moment equations.

In the current paper, we consider a very simple instance of a quadratic network that is simple enough to obtain analytical results, but complicated enough to present all the characteristics of more general quadratic reaction networks, i.e. unknown input signals correlated to the state and quadratic nonlinearities. We first show that the considered reaction network is structurally exponentially ergodic, which implies that, for any values of the parameters, there exists a unique attractive stationary distribution. We, moreover, prove that all the moments exist and are globally exponentially converging to their unique equilibrium value. These results motivate the use of a simple integral control law since stabilization of the process is not necessary, only reference tracking is demanded. The main result of the paper, addressing local asymptotic stabilization of the controlled network around any suitable, but partially unknown, equilibrium point using integral control is provided next. Explicit bounds on the controller gain are derived in terms of the network parameters and shown to be independent of the equilibrium point, making therefore the control law generic for this type of network. An analytical bound on the variance is also obtained in the process. The theoretical results are then illustrated through an example relying on stochastic simulation algorithms simulating a cell population.

\textit{Outline:} The structure of the paper is as follows. The problem is stated in Section \ref{sec:prob} and the main results are obtained in Section \ref{sec:results}. An illustrative example is finally discussed in Section \ref{sec:ex}.

%\textit{Notations:}

\section{Problem statement}\label{sec:prob}

\subsection{General framework}

Assume $N$ molecular species $S_1,\ldots,S_N$ interacting with each others through $M$ reaction channels $R_1,\ldots,R_M$. Under the assumption of homogeneous mixing and thermal equilibrium, the time evolution of the random variables $X_1(t),\ldots,X_N(t)$ associated with the population of each species can be described by the so-called Chemical Master Equation (CME), or Forward Kolmogorov equation, given by
\begin{equation*}
  \dot{P}(\varkappa,t)=\sum_{k=1}^M\left[w_k(\varkappa-s_k)P(\varkappa-s_k,t)-w_k(\varkappa)P(\varkappa,t)\right]
\end{equation*}
where $s_k$ is the stoichiometry vector associated with reaction $R_k$ and $w_k$ the propensity function capturing the rate of the reaction $R_k$. The variable $\varkappa$ is the state-variable and $P(\varkappa,t)$ denotes the probability to be in state $\varkappa\in\mathbb{Z}_{\ge0}^N$ at time $t$.

Based on the CME, the following dynamical model for the first-order moments can be easily obtained
\begin{equation}\label{eq:moments}
    \dfrac{\d \E[X]}{\d t}=S\E[w(X)]
\end{equation}
where $S:=\begin{bmatrix}
  s_1 & \ldots & s_M
\end{bmatrix}\in\mathbb{R}^{N\times M}$ is the stoichiometry matrix and $w(X):=\begin{bmatrix}
  w_1(X)^T & \ldots & w_M(X)^T
\end{bmatrix}^T\in\mathbb{R}^{M}$ the propensity vector.

Whenever the propensity functions are affine, the above dynamical model is well-defined in the sense that the moment trajectories are uniquely defined by initial conditions $\E[X(0)]$. When the propensity functions are nonlinear, we face the moment closure problem corresponding to the fact that moment dynamics depend on moments of nonlinear functions of the random variable $X(t)$. When, for instance, propensities are quadratic, the first-order moments depend on the second-order moments, and so forth. We therefore end up, in the latter case, with an infinite number of linear differential equations. If we, however, restrict ourselves to the dynamics of the first order moments, the resulting system of differential equations will be open, i.e. will have inputs that are, somehow, correlated to the state. In such a case, initial conditions are not sufficient anymore for fully defining a trajectory solution for \eqref{eq:moments} since we also need the values of the inputs at any time, which are most of time not directly computable. A way for resolving this problem consists of \emph{closing the moments} by, for instance, expressing the inputs as functions of the state of the system or by neglecting higher-order cumulants; see e.g. \cite{Gomez:07,Gillepsie:09,Singh:11,Milner:11}. We shall, however, not use any closure technique in the current paper and attack the problem directly.

\subsection{A quadratic dimerization process}

Mean control and mean/variance control of a gene-expression network, which is an affine network, have been performed in \cite{Khammash:11} and \cite{Briat:12c}, respectively. The goal here is to go beyond affine networks and show that similar ideas can still be applied, even in presence of closedness problems. We will therefore focus on the following stochastic chemical reaction network
\begin{equation}\label{eq:reacnet}
\begin{array}{lccclclcccl}
   R_1&:&\phi&\stackrel{k_1}{\longrightarrow}&S_1,&&  R_2&:&S_1+S_1&\stackrel{b}{\longrightarrow}&S_2,\\
   R_3&:&S_1&\stackrel{\gamma_1}{\longrightarrow}&\phi,&&  R_4&:&S_2&\stackrel{\gamma_2}{\longrightarrow}&\phi
 \end{array}
\end{equation}
in which the protein $S_1$ dimerizes into $S_2$ at rate $b$. As it will be explained later, this network is simple enough to obtain analytical results, but complicated enough to exhibit all the difficulties arising in the control of stochastic quadratic reaction networks. The goal of the paper is to provide a solution to the following problem:
\begin{problem}
  Design a controller such that the average dimer population $\E[X_2(t)]$ locally exponentially converges to the reference $\mu$.
%  \begin{equation}
%    \lim_{t\to\infty}\E[X_2(t)]\to\mu,
%  \end{equation}
%  for some set of initial conditions. Above, $\mu>0$ is the reference to track and $X_2(t)$ is the random variable associated with species $S_2$.
\end{problem}

%In other words, the above problem corresponds to controlling the average number of dimer $S_2$ among a population of cells\footnote{Note that if we were to control the average number of protein $S_1$, we could have dropped the species $S_2$ in the network definition.} for some initial average population of proteins.
The first step towards a suitable solution of the problem above, consists of defining a model for the dynamics of the first-order moments \cite{Khammash:11,Briat:12c}. For this specific network, we indeed have:
\begin{proposition}
The first-order moment dynamics are described by the open system of nonlinear differential equations
\begin{equation}\label{eq:syst}
  \begin{array}{lcl}
    \dot{x}_1(t)&=&k_1+(b-\gamma_1)x_1(t)-bx_1(t)^2-bv(t)\\
    \dot{x}_2(t)&=&-\dfrac{b}{2}x_1(t)-\gamma_2 x_2(t)+\dfrac{b}{2}x_1(t)^2+\dfrac{b}{2}v(t)
  \end{array}
\end{equation}
where $x_i(t):=\E[X_i(t)]$, $i=1,2$, and $v(t):=V(X_1(t))$ is the variance of the random variable $X_1(t)$.\mendth
\end{proposition}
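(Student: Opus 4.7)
The plan is to specialize the general first-moment equation \eqref{eq:moments} to the four reactions in \eqref{eq:reacnet} and then eliminate the second raw moment $\E[X_1^2]$ in favor of the mean $x_1$ and the variance $v$ via the identity $\E[X_1^2]=V(X_1)+\E[X_1]^2$.

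First I would list the stoichiometry vectors and the mass-action propensities. The production, monomer degradation and dimer degradation reactions give $s_1=(1,0)^\T$, $s_3=(-1,0)^\T$, $s_4=(0,-1)^\T$ with affine propensities $w_1=k_1$, $w_3=\gamma_1 X_1$, $w_4=\gamma_2 X_2$. The dimerization reaction $R_2$ consumes two monomers and produces one dimer, so $s_2=(-2,1)^\T$, and its propensity, using the combinatorial convention for a bimolecular reaction involving two copies of the same species, is $w_2=b\binom{X_1}{2}=\tfrac{b}{2}X_1(X_1-1)$. The small but crucial point here is that the $-X_1$ term coming from this combinatorial factor is exactly what produces the $-\tfrac{b}{2}x_1$ and $+(b-\gamma_1)x_1$ contributions in the stated equations; confusing $w_2$ with $bX_1^2$ would yield the wrong coefficients.

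Next I would plug these ingredients into \eqref{eq:moments}. For the monomer this yields
\begin{equation*}
\dot{x}_1 = k_1 - 2\cdot\tfrac{b}{2}\E[X_1(X_1-1)] - \gamma_1 x_1 = k_1 + (b-\gamma_1)x_1 - b\,\E[X_1^2],
\end{equation*}
and for the dimer
\begin{equation*}
\dot{x}_2 = \tfrac{b}{2}\E[X_1(X_1-1)] - \gamma_2 x_2 = -\tfrac{b}{2}x_1 - \gamma_2 x_2 + \tfrac{b}{2}\E[X_1^2].
\end{equation*}
Substituting $\E[X_1^2]=x_1^2+v(t)$ in both equations yields exactly the system \eqref{eq:syst}.

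There is essentially no obstacle: the whole content is bookkeeping together with one variance-mean identity. The only choice that requires care is the convention for the dimerization propensity; once that is fixed, everything else is an algebraic rearrangement, and it is also worth noting that, as anticipated in the preceding discussion of moment closure, the resulting two-dimensional system is \emph{open} because $v(t)$ appears as an unknown exogenous input rather than as a function of $(x_1,x_2)$ alone.
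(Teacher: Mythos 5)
Your proposal is correct and follows exactly the paper's argument: the same stoichiometry matrix (columns $s_1=(1,0)^\T$, $s_2=(-2,1)^\T$, $s_3=(-1,0)^\T$, $s_4=(0,-1)^\T$), the same propensity vector with the combinatorial dimerization rate $\tfrac{b}{2}X_1(X_1-1)$, substituted into \eqref{eq:moments} and closed in appearance via $V(X_1)=\E[X_1^2]-\E[X_1]^2$. The only difference is that you spell out the intermediate algebra the paper leaves implicit, which is fine.
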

\begin{proof}
  The proof follows from the application of the general formula \eqref{eq:moments} with
  \begin{equation*}
    S=\begin{bmatrix}
      1 & -2 & -1 & 0\\
      0 & 1 & 0 & -1
    \end{bmatrix}
  \end{equation*}
  and
%  \begin{equation*}
  $ w(X)=\begin{bmatrix}
      k_1 & \frac{b}{2}X_1(X_1-1) & \gamma_1X_1 & \gamma_2X_2
    \end{bmatrix}^T$.
  %\end{equation*}
  Using finally the identity $V(X_1)=\E[X_1^2]-\E[X_1]^2$, the result is obtained.
\end{proof}

\subsection{Main difficulties}\label{sec:diff}

In spite of being simple, the  network \eqref{eq:reacnet} presents all the difficulties that can arise in quadratic reaction networks and is a good candidate for emphasizing that moment control can be analytically solvable, even in presence of the moment closure problem. Below is a list of difficulties that are specific to network \eqref{eq:reacnet} and, a fortiori, specific to any network having quadratic reactions:
\begin{enumerate}
  \item The system \eqref{eq:syst} has the variance $v(t):=V(X_1(t))$ as input signal and it is not known, a priori, whether it is bounded over time or even asymptotically converging to a finite value $v^*$.
  \item The system \eqref{eq:syst} is nonlinear and nonlinear terms may not be neglected since they may enhance certain properties such as stability. It will be shown later that this is actually the case for system \eqref{eq:syst}.
  \item Due to our complete ignorance in the value of $v^*$ (if it exists), the system \eqref{eq:syst} exhibits an infinite number of equilibrium points. Understand this, however, as an artefact arising from the definition of the model \eqref{eq:syst} since the first-order moments may, in fact, have a unique stationary value.
\end{enumerate}

\section{Main results}\label{sec:results}

\subsection{Preliminaries}

The following result proves a crucial stability property for our process:
\begin{theorem}\label{th:ergodicd}
  For any value of the network parameters $k_1,b,\gamma_1$ and $\gamma_2$, the reaction network \eqref{eq:reacnet} is exponentially ergodic and has all its moments bounded and globally exponentially converging. Notably, for any initial state $X(0)$ of the Markov process, there exists a unique $v^*\ge0$ such that $v(t)\to v^*$ as $t\to\infty$.\mendth
  %
  %Moreover, given any initial distribution $P(\varkappa,0):=\delta_{X_0}(\varkappa)$, the probability distribution $P(\varkappa,t)$ is light-tailed.
\end{theorem}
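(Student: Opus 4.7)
The plan is to establish exponential ergodicity of the continuous-time Markov chain $X(t)=(X_1(t),X_2(t))$ on $\mathbb{Z}_{\ge0}^2$ by a Foster--Lyapunov argument in the style of Meyn--Tweedie, from which boundedness and exponential convergence of all moments will follow as a standard consequence, and then to deduce the stated behaviour of $v(t)$ as a direct corollary. Irreducibility of a single accessible communicating class is essentially immediate from the constitutive production reaction $R_1$ and the degradation reactions $R_3,R_4$, so the real work lies in constructing a Lyapunov function $V$ satisfying a geometric drift condition $\mathcal{A}V\le -cV + d\,\mathbf{1}_{C}$ on a finite set $C$, where $\mathcal{A}$ denotes the generator of the chain.

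First I would exploit the one-way coupling: reactions $R_1,R_2,R_3$ depend only on $X_1$, so the marginal process $X_1$ is itself a Markov chain. Applying $\mathcal{A}$ to $V_1(x_1)=x_1$ gives $\mathcal{A}V_1(x_1)=k_1-\gamma_1 x_1 - b x_1(x_1-1)$, whose quadratic sink dominates for large $x_1$. More generally, for $V_p(x_1)=x_1^p$ a direct generator computation yields $\mathcal{A}V_p\le -c_p V_p + d_p$ outside some finite set, so every polynomial moment of $X_1$ is uniformly bounded and converges exponentially fast; taking $V(x_1)=e^{\alpha x_1}$ for $\alpha>0$ sufficiently small makes the generator strictly negative definite outside a finite set (the quadratic dimerization beats any linear growth of $e^{\alpha x_1}$ in the drift) and therefore yields geometric ergodicity and finite exponential moments for the $X_1$-chain.

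Next I would address the full two-species chain with a composite Lyapunov function of the form $V(x_1,x_2)=\phi(x_1)+\beta x_2^q$, where $\phi$ is the Lyapunov function built in the previous step. The contribution of $\mathcal{A}$ acting on $\beta x_2^q$ is of order $O(x_1^2 x_2^{q-1})-q\gamma_2\beta x_2^q$: the unwanted $x_1^2 x_2^{q-1}$ cross term coming from the dimerization reaction $R_2$ is controlled by Young's inequality and absorbed by the large negative drift that $\phi$ already carries in the $x_1$ direction, provided $\beta$ is chosen small enough. This produces a genuine geometric drift inequality for $V$, and Meyn--Tweedie then delivers exponential ergodicity with a unique stationary distribution $\pi$, together with $\E[f(X(t))]\to\pi(f)$ exponentially fast for every $f$ dominated by $V$; since $V$ can be chosen to dominate any polynomial (or even any exponential of a linear function), \emph{all} moments exist, are uniformly bounded, and converge exponentially.

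Finally, specializing this convergence to $f(x_1)=x_1$ and $f(x_1)=x_1^2$ shows that both $\E[X_1(t)]$ and $\E[X_1(t)^2]$ converge exponentially to finite limits independent of the initial state, and hence $v(t)=\E[X_1(t)^2]-\E[X_1(t)]^2$ converges to a unique $v^*\ge0$, namely the stationary variance of $X_1$ under $\pi$. The main obstacle in this programme is the calibration of the composite Lyapunov function: the quadratic propensity of $R_2$ is precisely the source of the moment-closure issue in the ODE \eqref{eq:syst}, and the exponents and weights in $V$ must be selected so that the $x_1^2 x_2^{q-1}$ cross term is dominated uniformly on $\mathbb{Z}_{\ge0}^2$ by the combined linear degradation of $X_2$ and the quadratic sink in the $X_1$ marginal, without breaking the Foster inequality near the axes.
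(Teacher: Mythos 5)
Your proposal is correct in outline and would go through with standard (if somewhat laborious) verification, but it takes a genuinely different route from the paper. The paper does not build composite or exponential Lyapunov functions at all: it invokes a result from \cite{Briat:13g} that requires only a \emph{linear} function $V(x)=\nu^{\T}x$, $\nu\in\mathbb{R}^2_{>0}$, subject to two drift-type conditions, $LV(x)\le c_1-c_2V(x)$ and $LV(x)^2-(LV(x))^2\le c_3+c_4V(x)$, and then observes that the choice $\nu=(1,2)^{\T}$ makes the quadratic propensity term vanish \emph{identically}, because the dimerization reaction conserves $x_1+2x_2$ (its contribution to $LV$ carries the factor $\nu_2-2\nu_1=0$). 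Both conditions then reduce to one-line affine computations, $LV=k_1-\gamma_1x_1-2\gamma_2x_2$ and $LV^2-(LV)^2=k_1+\gamma_1x_1+4\gamma_2x_2$, with $c_2=\min\{\gamma_1,\gamma_2\}$, $c_4=\max\{\gamma_1,2\gamma_2\}$, and the cited theorem delivers exponential ergodicity together with boundedness and exponential convergence of \emph{all} moments in one shot. By contrast, you attack the quadratic term head-on: you exploit the autonomy of the $X_1$ marginal (a valid structural observation --- the propensities of $R_1,R_2,R_3$ depend only on $x_1$), use $e^{\alpha x_1}$ for the marginal, and then calibrate a composite function $\phi(x_1)+\beta x_2^q$ with Young's inequality to absorb the $x_1^2x_2^{q-1}$ cross term --- precisely the domination problem the paper's conservation trick sidesteps entirely. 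Your approach buys self-containedness (classical Meyn--Tweedie rather than the specialized theorem of \cite{Briat:13g}) and a strictly stronger byproduct (finite exponential moments of $X_1$), at the cost of per-moment-order Lyapunov constructions; the paper's approach buys extreme economy and is the template that generalizes to any network admitting a positive vector $\nu$ annihilating the nonlinear propensity directions. One small slip worth fixing: irreducibility is not immediate from $R_1,R_3,R_4$ alone --- reaching states with $X_2>0$ requires $R_2$ (e.g. $(0,0)\to(2,0)\to(0,1)$), so $R_2$ must be part of the accessibility argument.
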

\begin{proof}
See Appendix \ref{ap:ergodic}.
\end{proof}

The above result provides an answer to the first difficulty mentioned in Section \ref{sec:diff}. It indeed states that, for any parameter configuration, all the moments are bounded and exponentially converging to a unique stationary value. %This notably implies that there exists a unique $v^*\ge0$ such that $v(t)\to v^*$ as $t\to\infty$.

The next step consists of choosing a suitable control input, that is, a control input from which any reference value $\mu$ for $x_2$ can be tracked. We propose to use the production rate $k_1$ as control input. To prove that this control input is judicious we need the following assumption motivated by the structure of the network \eqref{eq:reacnet}:
%
%We will also need the following technical assumption
%\begin{hyp}\label{ass:1}
%  The reaction network \eqref{eq:reacnet} has finite and converging first- and second-order moments for any choice of parameters $\gamma_1,\gamma_2,k_1,b>0$, i.e. $v(t)$ is bounded and converges to $v^*$ as $t$ goes to infinity.
%\end{hyp}
%
%The above assumption is necessary in order to make the proposed framework relevant. Moment-control, indeed, does not change intrinsic statistical properties of a network since they depend on the network itself. Trying to control moments of networks that have undefined moments is clearly an ill-posed problem. We are currently working on a workaround to this assumption.
%
\begin{hyp}\label{ass:2}
The function $S^*:=x_1^{*2}-x_1^*+v^*$, where $x_1^*$ is the equilibrium solution for $x_1$ and $v^*$ is the equilibrium variance, verifying the equation
  \begin{equation}
    k_1-\gamma x_1^*-bS^*=0,
  \end{equation}
is a continuous function of $k_1$.\mendth
\end{hyp}

By indeed increasing $k_1$, we will have more $X_1$ at stationarity, and consequently more $X_1^2$. It seems important to stress here that the continuity of the stationary distribution with respect to the network parameters cannot be assessed from the continuity of the probability distribution over time since the limit of continuous functions need not be continuous. Therefore, an argument based on the continuity of the stationary distribution seems difficult to consider.

Based on the above assumption, we can state the following result:
\begin{proposition}\label{prop:mu}
 For any $\mu>0$, there exists $k_1(\mu)>0$ such that we have $x_2^*=\mu$ where $x_2^*$ is the unique stationary value for $\E[X_2]$.\mendth
\end{proposition}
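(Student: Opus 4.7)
The plan is to reduce the claim to an application of the intermediate value theorem to the scalar map $k_1 \mapsto x_2^*(k_1)$, using Theorem~\ref{th:ergodicd} to guarantee that this map is well-defined and Assumption~\ref{ass:2} to guarantee that it is continuous.

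First I would set $\dot x_1 = \dot x_2 = 0$ in \eqref{eq:syst} to obtain the stationary equations
\begin{equation*}
  k_1 - \gamma_1 x_1^* - b S^* = 0, \qquad
  -\gamma_2 x_2^* + \tfrac{b}{2} S^* = 0,
\end{equation*}
where $S^* = x_1^{*2} - x_1^* + v^*$. Solving the second equation gives $x_2^* = \frac{b}{2\gamma_2} S^*$. By Theorem~\ref{th:ergodicd}, for each $k_1>0$ the moments (and in particular $x_1^*$ and $v^*$) are uniquely determined by the stationary distribution, so $S^*$ and hence $x_2^*$ are well-defined functions of $k_1$.

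Next I would show that $S^*\ge 0$ for every $k_1>0$. This follows from the simple observation that $X_1(X_1-1)\ge 0$ for every integer value of $X_1\ge 0$, so taking expectation at stationarity gives $\E[X_1^2] - \E[X_1] \ge 0$, i.e.\ $x_1^{*2}+v^*-x_1^*\ge 0$. Combined with $v^*\ge 0$ and $x_1^*\ge 0$, the identity $k_1 = \gamma_1 x_1^* + b S^*$ now expresses $k_1$ as a sum of two nonnegative terms. Letting $k_1\to 0^+$ forces both $x_1^*\to 0$ and $S^*\to 0$, hence $x_2^*(k_1)\to 0$. Conversely, as $k_1\to\infty$, at least one of $\gamma_1 x_1^*$ and $b S^*$ must diverge; since $S^* = x_1^{*2} - x_1^* + v^*$ and $v^*\ge 0$, even in the extreme case where only $x_1^*$ diverges we get $S^*\to\infty$, so in all cases $x_2^*(k_1) = \frac{b}{2\gamma_2} S^*(k_1) \to \infty$.

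Finally, Assumption~\ref{ass:2} states precisely that $S^*$ is a continuous function of $k_1$, and therefore so is $x_2^*(k_1)$. Since the continuous map $k_1\mapsto x_2^*(k_1)$ satisfies $\lim_{k_1\to 0^+} x_2^*(k_1) = 0$ and $\lim_{k_1\to\infty} x_2^*(k_1) = +\infty$, the intermediate value theorem yields, for every $\mu>0$, some $k_1(\mu)>0$ with $x_2^*(k_1(\mu)) = \mu$. The main obstacle in carrying this out is the limit analysis: one must rule out pathological scenarios where $x_1^*$ and $v^*$ compensate to keep $S^*$ bounded as $k_1\to\infty$, and ensure that $S^*$ really collapses to zero as $k_1\to 0^+$; the nonnegativity of each term in $k_1 = \gamma_1 x_1^* + bS^*$, together with the factorial-moment identity $\E[X_1(X_1-1)]\ge 0$, is exactly what makes both limits rigorous without any further information on the stationary distribution.
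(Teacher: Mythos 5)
Your proposal is correct and follows essentially the same route as the paper's proof: reduce the stationarity conditions to the scalar relation $\gamma_2\mu=\tfrac{b}{2}S^*(k_1)$ with $S^*=x_1^{*2}-x_1^*+v^*$, show $S^*\to0$ as $k_1\to0^+$ and $S^*\to\infty$ as $k_1\to\infty$, and conclude via Assumption~\ref{ass:2} and the intermediate value theorem. Your use of the factorial-moment nonnegativity $\E[X_1(X_1-1)]\ge0$ (giving $0\le bS^*\le k_1$) makes both limit claims slightly more direct than the paper's version, which handles $k_1=0$ through the ergodicity moment bound $c_1/c_2$ and the divergence of $S^*$ through a contradiction argument based on Jensen's inequality, but the underlying estimates are the same.
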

\begin{proof}
  See Appendix \ref{ap:mu}.
\end{proof}
From the results stated in Theorem \ref{th:ergodicd} and Proposition \ref{prop:mu}, it seems reasonable to consider a pure integral control law since exponential stability nominally holds and only tracking is necessary. Therefore, we propose that $k_1$ be actuated as
\begin{equation}\label{eq:cl}
\begin{array}{rcl}
  \dot{I}(t)&=&\mu-x_2(t)\\
    k_1(t)&=&k_c\varphi(I(t))
\end{array}
\end{equation}
where $k_c>0$ is the gain of the controller, $\mu$ is the reference to track and $\varphi(y):=\max\{0,y\}$. %The rationale for proposing such a control law relies on the fact that the system is ergodic and has all its moments bounded and converging for any constant $k_1$. The system is therefore already stable and, thus, all we have to do is shift the equilibrium point to the desired value $\mu$ (which is assumed to be within the range of achievable values).

%Since local stabilization is the main goal, the first step is to compute the equilibrium points whose existence is ensured from Assumption \ref{ass:1}. Note, however, that they are in infinite number due to the dependence on the variance. This difficulty is an immediate consequence of the moment closure problem and was not arising in the previous works \cite{Khammash:11,Briat:12c} where propensities were affine.

\subsection{Nominal stabilization result}

We are now in position to state the main result of the paper:
\begin{theorem}[Main stabilization result]\label{th:main}
  For any finite positive constants $\gamma_1,\gamma_2,b,\mu$ and any controller gain $k_c$ satisfying
  \begin{equation}\label{eq:kcbound}
  0<k_c<2\gamma_2\left(2\gamma_1+\gamma_2+2\sqrt{\gamma_1(\gamma_1+\gamma_2)}\right),
\end{equation}
  the closed-loop system \eqref{eq:syst}-\eqref{eq:cl} has a unique locally exponentially stable equilibrium point $(x_1^*,x_2^*,I^*)$ in the positive orthant such that $x_2^*=\mu$.
%  \begin{equation}
%    \lim_{t\to\infty}\E[X_2(t)]=\mu.
%  \end{equation}
The equilibrium variance moreover satisfies
\begin{equation*}
\begin{array}{lcr}
   \quad\quad\quad\quad\quad\quad&v^*\in\left(0,\dfrac{2\gamma_2\mu}{b}+\dfrac{1}{4}\right].& \quad\quad\quad\quad\quad\quad\vartriangle
\end{array}
\end{equation*}
\end{theorem}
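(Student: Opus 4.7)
The plan is to characterize the closed-loop equilibrium, linearize around it, and apply the Routh--Hurwitz criterion, then optimize the resulting bound over the admissible range of $x_1^*$ so as to obtain a condition on $k_c$ that is independent of $\mu$ and $b$.

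First, any equilibrium of \eqref{eq:syst}--\eqref{eq:cl} must satisfy $x_2^* = \mu$ (from $\dot I = 0$), the algebraic constraint $(x_1^*)^2 - x_1^* + v^* = 2\gamma_2 \mu / b$ (from $\dot x_2 = 0$), and, after substituting the latter into $\dot x_1 = 0$, the relation $k_1^* = \gamma_1 x_1^* + 2\gamma_2 \mu > 0$. Positivity of $k_1^*$ forces $I^* = k_1^*/k_c > 0$, so $\varphi$ is smooth on a neighborhood of the equilibrium; uniqueness in the positive orthant follows from Proposition \ref{prop:mu} together with Assumption \ref{ass:2}. The variance bound is immediate: rewriting the constraint as $v^* = 2\gamma_2\mu/b + x_1^*(1 - x_1^*)$ and using $y(1-y) \le 1/4$ for all real $y$ gives $v^* \le 2\gamma_2\mu/b + 1/4$, with strict positivity of $v^*$ following from the nondegeneracy of the underlying Markov chain (Theorem \ref{th:ergodicd}).

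Second, I linearize the closed-loop vector field around this equilibrium. With $\varphi(I) = I$ valid locally, the Jacobian takes the form
\begin{equation*}
A = \begin{pmatrix} -\alpha & 0 & k_c \\ \beta & -\gamma_2 & 0 \\ 0 & -1 & 0 \end{pmatrix},\qquad \alpha := 2bx_1^* - b + \gamma_1,\quad \beta := b(x_1^* - 1/2),
\end{equation*}
which satisfies the convenient identity $\alpha = 2\beta + \gamma_1$. A cofactor expansion then yields the characteristic polynomial
\begin{equation*}
p(\lambda) = \lambda^3 + (\alpha + \gamma_2)\lambda^2 + \alpha\gamma_2\, \lambda + k_c \beta.
\end{equation*}

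Third, I apply the Routh--Hurwitz criterion for cubics, which demands that all coefficients be positive and $(\alpha + \gamma_2)\,\alpha\gamma_2 > k_c \beta$. Positivity of the coefficients is equivalent to $\beta > 0$, since $\alpha = 2\beta + \gamma_1$ is then automatically positive, and the remaining inequality rearranges to $k_c < g(\beta) := \gamma_2 (2\beta + \gamma_1)(2\beta + \gamma_1 + \gamma_2)/\beta$. Substituting $u = 2\beta$ rewrites this as
\begin{equation*}
g = 2\gamma_2 \!\left[ u + (2\gamma_1 + \gamma_2) + \frac{\gamma_1(\gamma_1+\gamma_2)}{u}\right],
\end{equation*}
whose minimum over $u > 0$ is attained, by AM--GM applied to the first and third summands, at $u^* = \sqrt{\gamma_1(\gamma_1+\gamma_2)}$ and equals $2\gamma_2 \bigl( 2\gamma_1 + \gamma_2 + 2\sqrt{\gamma_1(\gamma_1+\gamma_2)}\bigr)$, exactly the right-hand side of \eqref{eq:kcbound}. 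Any $k_c$ strictly below this minimum therefore satisfies Routh--Hurwitz \emph{uniformly} over all admissible equilibria, proving local exponential stability without any reference to $\mu$ or $b$.

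The main obstacle I foresee is reconciling the sign condition $\beta > 0$, i.e.\ $x_1^* > 1/2$, with the fact that the algebraic constraint $v^* \ge 0$ alone only forces $x_1^* \in [0, 1/2 + \sqrt{1/4 + 2\gamma_2\mu/b}]$; the constant term $k_c \beta$ of $p(\lambda)$ changes sign at $x_1^* = 1/2$. Closing this gap would require either a monotonicity argument in $k_1$ using Theorem \ref{th:ergodicd} and Assumption \ref{ass:2} to rule out the regime $x_1^* \le 1/2$ at the controlled equilibrium, or a complementary nonlinear Lyapunov argument exploiting the exponential ergodicity of the open-loop process to absorb the quadratic term $-bx_1^2$.
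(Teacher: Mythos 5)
Your core computation coincides, almost line for line, with the paper's own proof: the paper likewise reduces the equilibrium conditions to $x_1^{*2}-x_1^*+v^*=2\gamma_2\mu/b$ with $I^*>0$, linearizes, applies the Routh--Hurwitz criterion to the cubic $\lambda^3+(\alpha+\gamma_2)\lambda^2+\alpha\gamma_2\lambda+k_c\beta$ (written there with $b\sqrt{\Delta}$ in place of your $2\beta$), and minimizes the resulting gain bound over the free parameter to land exactly on the right-hand side of \eqref{eq:kcbound}; your substitution $u=2\beta$ plus AM--GM is a slightly cleaner packaging of the paper's minimization of $f(\zeta)=2\gamma_2(\gamma_1+\gamma_2+b\zeta)(\gamma_1+b\zeta)/(b\zeta)$, and it merges into one stroke what the paper does in three separate cases. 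The variance bound is obtained identically ($y(1-y)\le 1/4$, i.e.\ real solvability of the equilibrium quadratic). Your relation $k_1^*=\gamma_1x_1^*+2\gamma_2\mu$ is in fact the correct one and plays no role in the Jacobian.

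The genuine problem is the ``main obstacle'' you flag at the end: you have misdiagnosed it, and both of your proposed fixes point in the wrong direction. You do not need to \emph{rule out} equilibria with $x_1^*\le 1/2$ (via monotonicity in $k_1$ or a nonlinear Lyapunov argument); you need only observe that such equilibria are never locally exponentially stable, which follows instantly from your own characteristic polynomial: if $\beta\le 0$ then $p(0)=k_c\beta\le 0$ while $p(\lambda)\to+\infty$ as $\lambda\to+\infty$, so $p$ has a real root $\ge 0$ (strictly positive when $\beta<0$; a zero eigenvalue when $\beta=0$, i.e.\ when $\Delta=0$). Since the two roots of $x^2-x+(v^*-2\gamma_2\mu/b)=0$ sum to $1$, for each admissible $v^*$ with $\Delta>0$ exactly one candidate equilibrium, namely $x_1^*=\tfrac{1}{2}\bigl(1+\sqrt{\Delta}\bigr)$, has $\beta>0$, and the other branch, whenever it lies in the positive orthant (the case $v^*\ge 2\gamma_2\mu/b$), is automatically unstable. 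This is exactly how the paper disposes of the issue: in its Case 2 the equilibrium $x_1^*=0$ is discarded because the determinant of the linearization is positive, and in its Case 3 the point $z_-^*$ is shown to be always unstable. The theorem asserts uniqueness of the \emph{locally exponentially stable} equilibrium with $x_2^*=\mu$, not uniqueness of the equilibrium itself, so your Routh--Hurwitz analysis already contains everything needed to close the gap --- no appeal to Theorem \ref{th:ergodicd} or Assumption \ref{ass:2} beyond what Proposition \ref{prop:mu} already supplies.
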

\begin{proof}
The proof is given in the Appendix \ref{ap:main}.% \ref{ap:1}.
\end{proof}

The above result states two important facts that must be emphasized. First of all, the condition on the controller gain is uniform over $\mu>0$ and $b>0$, and is therefore valid for any combination of these parameters. This also means that a single controller, which locally stabilizes all the possible equilibrium points, is easy to design for this network. Second, the proof of the theorem provides an explicit construction of an upper-bound on the equilibrium variance $v^*$, which turns out to be a linearly increasing function of $\mu$. This upper-bound is, moreover, tight when regarded as a condition on the equilibrium points of the system since, when the equilibrium variance $v^*$ is greater than $2\gamma_2\mu/b+1/4$, the system does not admit any real equilibrium point.

\subsection{Robust stabilization result}

Let us consider the following set
\begin{equation}
\mathcal{P}:=[\gamma_1^-,\gamma_1^+]\times[\gamma_2^-,\gamma_2^+]\times[b^-,b^+]
\end{equation}
defined for some appropriate positive real numbers $\gamma_1^-<\gamma_1^+$, $\gamma_2^-<\gamma_2^+$ and $b^-<b^+$. We get the following generalization of Theorem \ref{th:main}:
\begin{theorem}[Robust stabilization result]
Assume the controller gain $k_c$ verifies
  \begin{equation}
0<k_c<2\gamma_2^-\left(2\gamma_1^-+\gamma_2^-+2\sqrt{\gamma_1^-(\gamma_1^-+\gamma_2^-)}\right).
\end{equation}
Then, for all $(\gamma_1,\gamma_2,b)\in\mathcal{P}$, the closed-loop system \eqref{eq:syst}-\eqref{eq:cl} has a unique locally stable equilibrium point $(x_1^*,x_2^*,I^*)$ in the positive orthant such that $x_2^*=\mu$. The equilibrium variance $v^*$, moreover, satisfies
\begin{equation*}
\begin{array}{lcr}
  \quad\quad\quad\quad\quad\quad& v^*\in\left(0,\dfrac{2\gamma_2^+\mu}{b^-}+\dfrac{1}{4}\right]. & \quad\quad\quad\quad\quad\quad\vartriangle
\end{array}
\end{equation*}
\end{theorem}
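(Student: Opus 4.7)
The plan is to leverage Theorem~\ref{th:main} pointwise over the parameter box $\mathcal{P}$ and show that the stabilizing interval of controller gains given there shrinks to precisely the interval stated in the robust theorem when we take the worst-case combination of $(\gamma_1,\gamma_2,b)$.

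First I would fix an arbitrary triple $(\gamma_1,\gamma_2,b)\in\mathcal{P}$ and apply Theorem~\ref{th:main}. That theorem guarantees, for any $k_c$ lying in $(0,\,2\gamma_2(2\gamma_1+\gamma_2+2\sqrt{\gamma_1(\gamma_1+\gamma_2)}))$, the existence of a unique locally exponentially stable equilibrium with $x_2^{*}=\mu$ and an equilibrium variance bounded above by $2\gamma_2\mu/b+1/4$. To obtain a gain bound that works uniformly across $\mathcal{P}$, I would minimize this right endpoint over the box, and to obtain a uniform variance bound I would maximize $2\gamma_2\mu/b+1/4$ over the box.

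The key analytic step is the monotonicity of
\begin{equation*}
f(\gamma_1,\gamma_2):=2\gamma_2\bigl(2\gamma_1+\gamma_2+2\sqrt{\gamma_1(\gamma_1+\gamma_2)}\bigr)
\end{equation*}
in each of its arguments on the positive orthant. Differentiating with respect to $\gamma_1$ and then $\gamma_2$ yields strictly positive expressions (all terms are sums of products and square roots of positive quantities), so $f$ is strictly increasing in both variables. Consequently $\min_{\mathcal{P}}f=f(\gamma_1^-,\gamma_2^-)$, which is exactly the bound appearing in the statement. The variance bound $2\gamma_2\mu/b+1/4$ is similarly monotone, increasing in $\gamma_2$ and decreasing in $b$, so it is maximized at $(\gamma_2^+,b^-)$, giving the stated $2\gamma_2^+\mu/b^-+1/4$.

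The last step is to string these pointwise conclusions together: for any $k_c$ satisfying the hypothesis of the robust theorem, the inequality $k_c<f(\gamma_1,\gamma_2)$ holds for every $(\gamma_1,\gamma_2,b)\in\mathcal{P}$ by the monotonicity just established, hence Theorem~\ref{th:main} applies parameter-by-parameter and the claimed equilibrium and variance bound follow uniformly over $\mathcal{P}$. I do not foresee a serious obstacle, since the hard work has already been done in proving Theorem~\ref{th:main} and the result is essentially a monotonicity corollary; the only minor subtlety is confirming that the additive $1/4$ in the variance bound is a universal constant independent of the network parameters, which is visible from the form of the bound in Theorem~\ref{th:main} and therefore need not be maximized separately.
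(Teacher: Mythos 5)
Your proposal is correct and follows essentially the same route as the paper: the authors likewise reduce the robust statement to Theorem~\ref{th:main} applied pointwise over $\mathcal{P}$, observing that the gain bound $2\gamma_2\left(2\gamma_1+\gamma_2+2\sqrt{\gamma_1(\gamma_1+\gamma_2)}\right)$ is strictly increasing in $\gamma_1$ and $\gamma_2$ (hence minimized at $(\gamma_1^-,\gamma_2^-)$) and that the variance bound $2\gamma_2\mu/b+1/4$ is maximized at $(\gamma_2^+,b^-)$. Your write-up is in fact slightly more explicit than the paper's two-line argument, including the correct observation that the additive $1/4$ is parameter-independent.
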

\begin{proof}
  The upper bound on the controller gain is a strictly increasing function of $\gamma_1$ and $\gamma_2$, and the most constraining value (smallest) is therefore attained at $\gamma_1=\gamma_1^-$ and $\gamma_2=\gamma_2^-$. A similar argument is applied to the variance upper-bound.
\end{proof}

\subsection{Additional remarks}

The following remark addresses the point that $\E[X_1(t)^2]$, and hence nonlinearities, cannot be neglected in the current problem since, without them, stabilization using the control-law \ref{eq:cl} is not even possible:
\begin{remark}
If we were, indeed, restricting ourselves to the simplified homogeneous linear dynamics
  \begin{equation}\label{eq:systL}
\dot{y}(t)=\begin{bmatrix}
  b-\gamma_1 & 0 & k_c\\
  -\frac{b}{2} & -\gamma_2 & 0\\
  0 & -1 & 0
\end{bmatrix}y(t)+\begin{bmatrix}
  0\\
  0\\
  \mu
\end{bmatrix},%+\begin{bmatrix}
% -b\\
%  \frac{b}{2}\\
%  0
%\end{bmatrix}w(t)
\end{equation}
we would incorrectly conclude that  1) the uncontrolled system may be unstable since the $2\times 2$ left-upper block is not Hurwitz whenever $b-\gamma_1>0$; and that 2) the system cannot be controlled by an integrator since the determinant of the system matrix is given by $\frac{bk_c}{2}>0$, implying then that the closed-loop system matrix is not Hurwitz\footnote{A necessary condition for a $3\times 3$ matrix to be Hurwitz is negativity of the determinant.}.\mendth
\end{remark}

%%%%%%%%%%%%%%%%%
% REMOVED REMARK
%%%%%%%%%%%%%%%%%%

The following remark addresses a key point in the linearization procedure of the moment equations:
\begin{remark}
  The linearized systems used for proving Theorem \ref{th:main} involve local variations of the variance as inputs. Only the variance equilibrium value $v^*$ has impact on local stability. We may ask whether this is technically correct. The main difficulty here lies in the fact that $v(t)$ can be very complicated and does not necessarily depends explicitly on $I(t)$, even if its equilibrium value $v^*$ does depend on $I^*$. If we assume independence of $I(t)$ and $v(t)$, the presented results are valid. If we assume, however, that $v(t)$ is a differentiable and increasing function of $I(t)$ (at least very locally), then the conclusions of Theorem \ref{th:main} are still valid. This can be proved by writing the local linear systems and looking at the Routh-Hurwitz conditions. In this case, the newly introduced terms depending on $dv/dI$ will actually improve stability properties of the system by  enlarging the admissible controller parameter space. The worst-case (most constraining) scenario is, interestingly, when $dv/dI=0$, that is when $v(t)$ is independent of $I(t)$ and, in this case, the results of Theorem \ref{th:main} are retrieved.\mendth
\end{remark}

\section{Example}\label{sec:ex}

\begin{algorithm*}
  \caption{Algorithm for simulating the controlled cell population}

  \begin{algorithmic}[1]
   \Require{$T_s,\mu,k_c,T>0$, $N,N_p\in\mathbb{N}$, $\{x_0^1,\ldots,x_0^N\}\in\left(\mathbb{N}_0^2\right)^N$, $I_0\in\mathbb{R}$ and $p\in\mathbb{R}_{>0}^{N_p}$}
   \State Create array $t$ of time instants from 0 to $T$ with time-step $T_s$.
   \State $N_s=\text{length}(t)$
   \State Initialize: $i\leftarrow1$, $y\leftarrow\text{mean}(x_0)$, $I\leftarrow I_0$
    \For{$i<N_s$}
    \State Update control input: $u\leftarrow k_c\cdot\max\{0,I\}$
    \State Update controller state: $I\leftarrow I+T_s(\mu-y)$
    \State Simulation of $N$ cells from time $t[i]$ to $t[i+1]$ with control input $u$ and network parameters $p$
    \State Update output: $y\leftarrow\text{mean}(protein\ population)$
    \State $i\leftarrow i+1$
    \EndFor
  \end{algorithmic}
\end{algorithm*}

Let us consider in this section the stochastic reaction network \eqref{eq:reacnet} with parameters $b=3$, $\gamma_1=2$ and $\gamma_2=1$. From condition \eqref{eq:kcbound}, we get that $k_c$ must satisfy $  0<k_c< 19.798$
%\begin{equation}
%  0<k_c< 19.798
%\end{equation}
to have local stability of the unique equilibrium point in the positive orthant. We then run Algorithm 1 with the controller gain $k_c=1$, a sampling period of $T_s=10$ms, the reference $\mu=5$, the controller initial condition $I(0)=0$, a population of $N=2000$ cells and initial conditions $x_0^i$ randomized in $\{0,1\}^2$, $i=1,\ldots,N$. The simulation results are depicted in Fig. \ref{fig:cell} to \ref{fig:variance}. We can clearly see in Fig.~\ref{fig:cell} that $\E[X_2(t)]$ tracks the reference $\mu$ reasonably well. The variance of $X_1(t)$, plotted in Fig. \ref{fig:variance}, is also verified to lie within the theoretically determined range of values. We indeed have $V(X_1(t))\simeq1.5$ in the stationary regime whereas the upper-bound is equal to $3+7/12\simeq3.583$. Moreover, since the variance at equilibrium is smaller than $\frac{2\gamma_2\mu}{b}=10/3$, we then have $v-2\gamma_2\mu/b<0$ and therefore case 1) holds in the proof of Theorem \ref{th:main}. It is, however, unclear whether this is also the case for any combination of network parameters.

\begin{figure}
\centering
  \includegraphics[width=0.3\textwidth]{./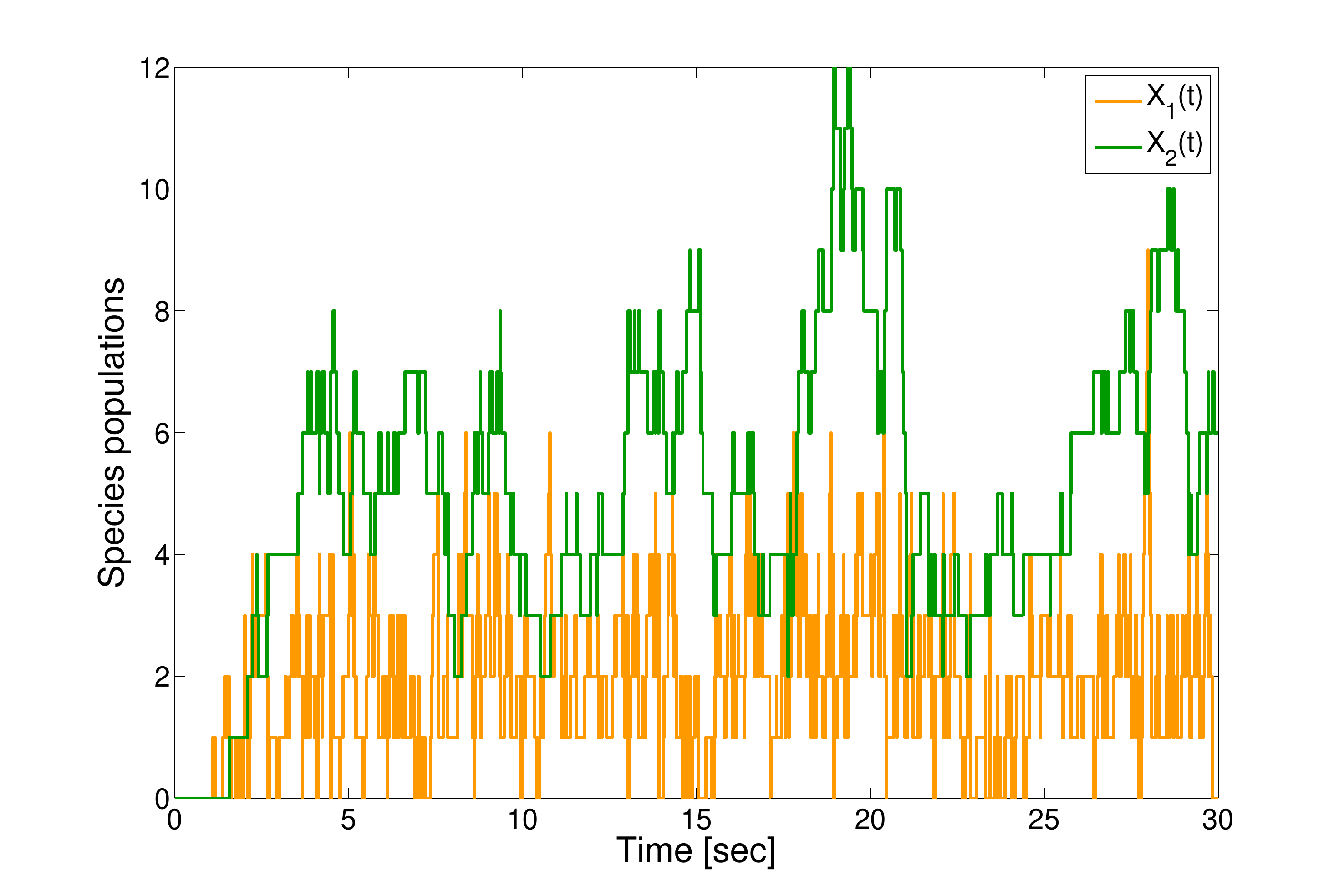}%\numfig}
  \caption{Evolution of proteins populations in a single cell}\label{fig:cell}
\end{figure}

\begin{figure}
\centering
  \includegraphics[width=0.3\textwidth]{./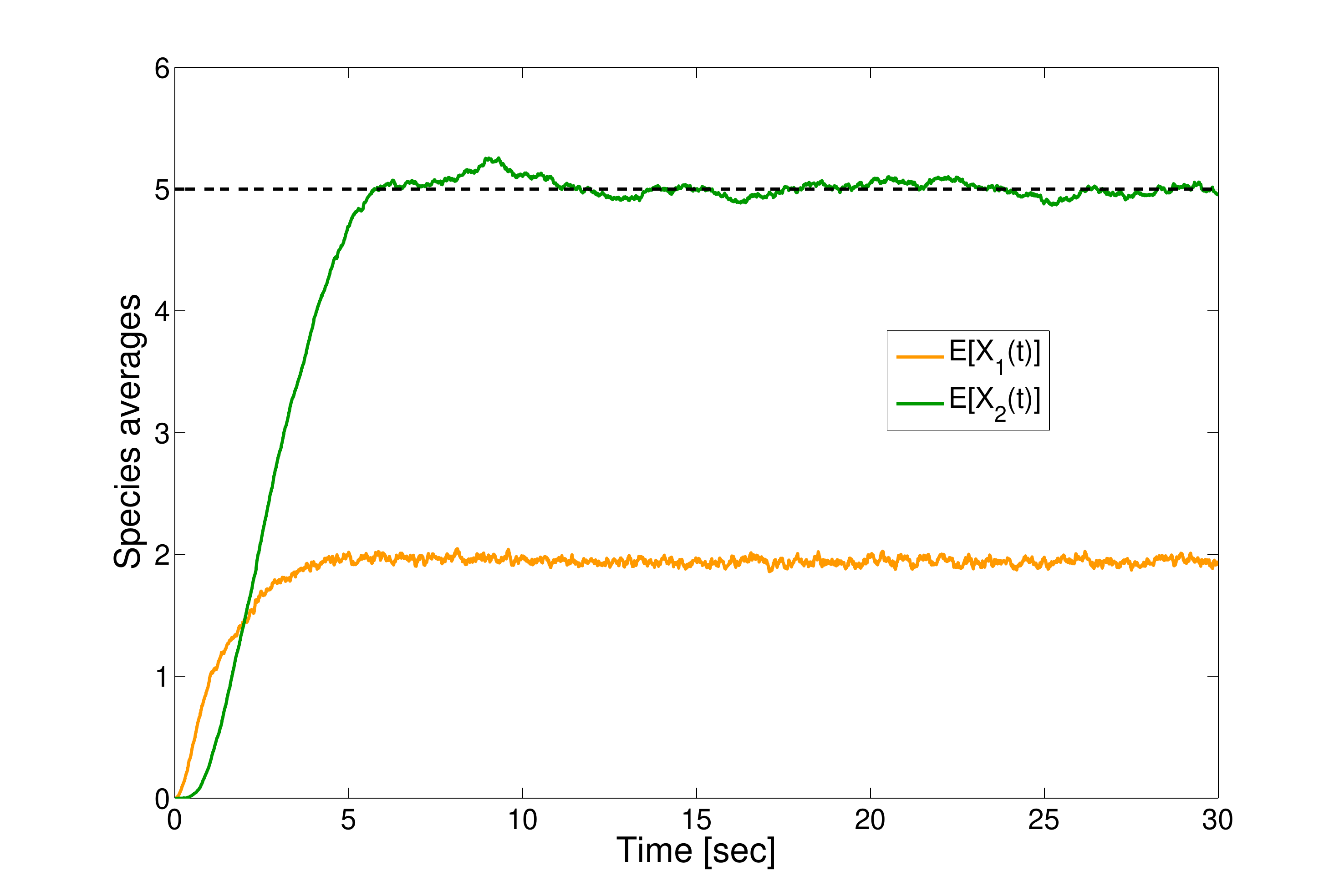}%\numfig}
  \caption{Evolution of the proteins averages in a population of 2000 cells}\label{fig:average}
\end{figure}

%\begin{figure}
%\centering
%  \includegraphics[width=0.3\textwidth]{./Matlab/integrator\numfig}
%  \caption{State of the integrator}\label{fig:integrator}
%\end{figure}

\begin{figure}
\centering
  \includegraphics[width=0.3\textwidth]{./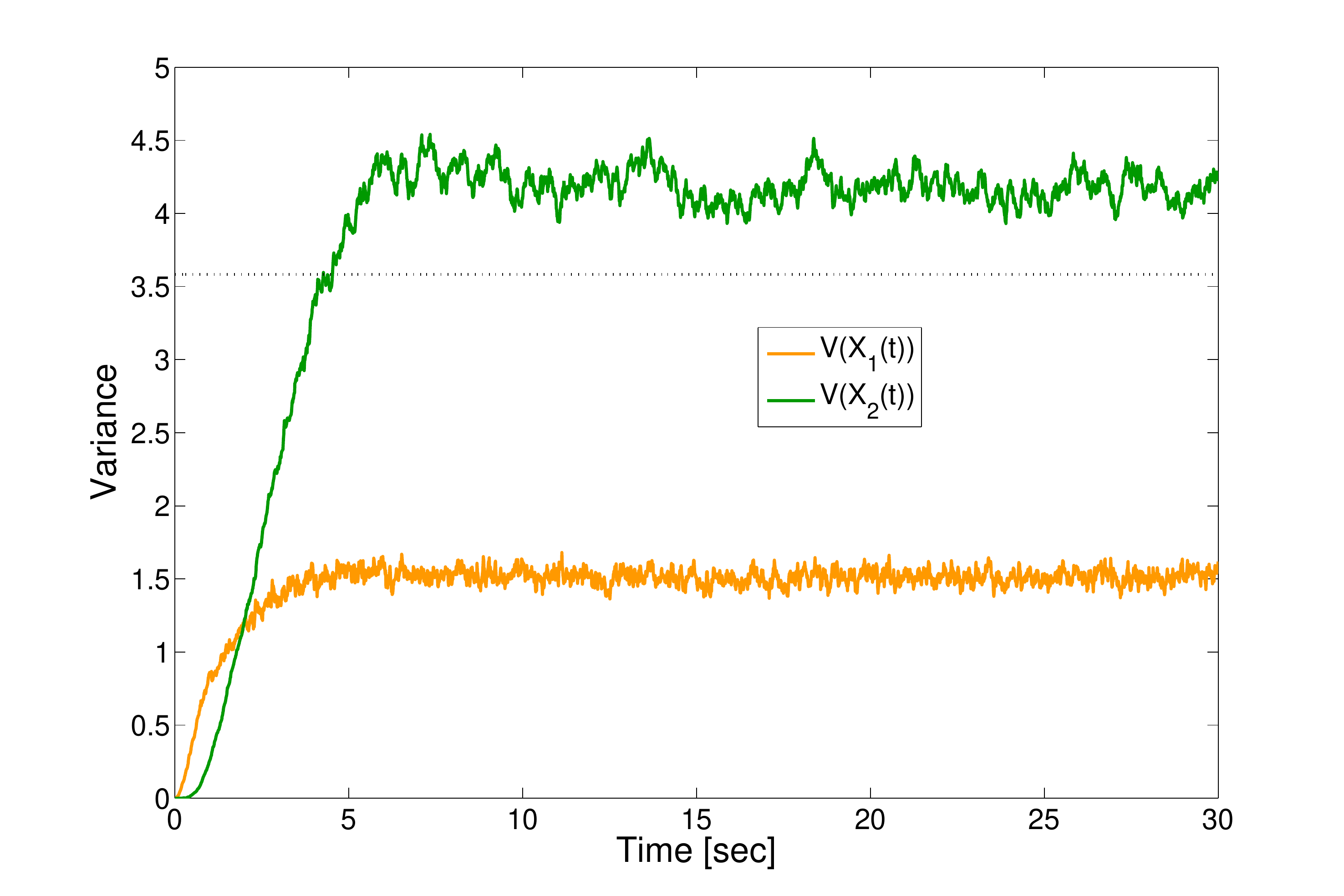}%\numfig}
  \caption{Evolution of the variances (computed with a population of 2000 cells). The dashed-line corresponds to the upper-bound on the equilibrium variance $V(X_1)$.}\label{fig:variance}
\end{figure}

\section{Conclusion}

The control problem of a the average population of dimers in a cell population has been proved to be solvable using integral feedback. Uniform bounds for the controller gains guaranteeing local asymptotic stability of a unique positive equilibrium point have been obtained. Interestingly, these bounds do not depend on the reference $\mu$ and the binding rate $b$, and are therefore valid for any reference value and a wide family of networks. One important emphasis of the proposed methodology is that the moment closure problem may not be a critical problem for control design.

\appendix

%\subsection{Proof of Proposition \ref{prop:positive}}\label{ap:positive}
%
%The first part of the proof consists of proving that whenever $x_1(s)=0$, for some $s\ge0$, then $v(s)=0$ as well. To this aim, let  $p_i(s):=\P(X_1(s)=i)$. Then, we have that $x_1(s)=\sum_{i=0}^\infty ip_i(s)=0$ and, therefore, the mass is concentrated at 0. Thus $p_0(s)=1$, $p_i(s)=0$, $i\ge1$ and $\E[X_1(s)^2]=v(s)+x_1(s)^2=0$. Consequently, $v(s)=0$. Note, moreover, that $x_1(t)^2-x_1(t)+v(t)\ge0$ for all $t\ge0$ since this is equal to $\E[X_1(t)(X_1(t)-1)]$, which only takes nonnegative values over $X_1(t)\in\mathbb{Z}_{\ge0}$. With these facts in mind, assume first that $x_1(t)=0$, then $v(t)=0$ and, therefore, we have that $\dot{x}_1(t)=k_1>0$
%%    \begin{equation}
%%      \dot{x}_1(t)=k_1>0
%%    \end{equation}
%    which shows that the state $x_1$ goes back to the positive values. Similarly, letting $x_2(t)=0$, we get that $ \dot{x}_2(t)=\dfrac{b}{2}(x_1(t)^2-x_1(t)+v(t))\ge0$
%%    \begin{equation}
%%      \dot{x}_2(t)=\dfrac{b}{2}(x_1(t)^2-x_1(t)+v(t))\ge0
%%    \end{equation}
%    where equality holds when $x_1(t)=0$, among others. Therefore, the state $x_2(t)$ is always nonnegative and the overall system is nonnegative as well.

\subsection{Proof of Theorem \ref{th:ergodicd}}\label{ap:ergodic}

The proof is based on the results of \cite{Briat:13g}. Let us consider the reaction network \eqref{eq:reacnet}. It is easily seen that the network is irreducible since any state can be reached from any state using a sequence of reactions having positive propensities. We now recall a result from \cite{Briat:13g} (adapted to the current setup):
  \begin{theorem}\label{th:ergodic}
    Let the function $V(x):=\nu^T x$ where $\nu\in\mathbb{R}^2_{>0}$. Assume the considered reaction network is irreducible and that there exist positive constants $c_1,c_2,c_3,c_4$ and a vector $\nu\in\mathbb{R}^2_{>0}$ such that the conditions
    \begin{subequations}\label{eq:dd}
  \begin{align}
    LV(x)&\le c_1-c_2V(x)\label{eq:dd1} \\
   LV(x)^2-(LV(x))^2&\le c_3+c_4V(x) \label{eq:dd2}
  \end{align}
\end{subequations}
    for all $x\in\mathbb{Z}_{\ge0}^2$ where $L$ is the generator of the Markov process corresponding to the reaction network.

    Then, the Markov process is ergodic and has all its moments bounded and exponentially converging.\mendth
  \end{theorem}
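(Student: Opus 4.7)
The plan is to prove the two conclusions separately: (i) exponential ergodicity and uniqueness of the stationary distribution, from condition \eqref{eq:dd1} alone; and (ii) boundedness and exponential convergence of all moments, by combining \eqref{eq:dd1} and \eqref{eq:dd2} in a Dynkin--Gr\"onwall bootstrap.

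For part (i), the key observation is that $V(x)=\nu^T x$ with $\nu\in\mathbb{R}^2_{>0}$ is norm-like on $\mathbb{Z}^2_{\ge0}$: every sublevel set $\{x:V(x)\le R\}$ is finite, and by irreducibility each such finite set is petite (small) for the Markov semigroup. Condition \eqref{eq:dd1} is then precisely the continuous-time geometric Foster--Lyapunov drift inequality $LV\le -c_2 V+c_1$, and I would invoke the standard continuous-time Foster--Lyapunov theorem (Meyn--Tweedie, or Down--Meyn--Tweedie, \textit{Exponential and uniform ergodicity of Markov processes}) to conclude positive Harris recurrence, existence of a unique stationary distribution $\pi$, and exponential convergence $\|P_t(x,\cdot)-\pi\|_V\le C(x)e^{-\rho t}$ for some $\rho>0$.

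For part (ii), I would proceed by induction on moment order using Dynkin's formula. Taking expectations in \eqref{eq:dd1} gives $\tfrac{\d}{\d t}\E[V(X_t)]\le c_1-c_2\E[V(X_t)]$; Gr\"onwall then yields $\E[V(X_t)]\le V(X_0)e^{-c_2 t}+c_1/c_2$, so the first moment is bounded and exponentially converges. For the second moment, apply Dynkin to $V^2$ together with the jump-generator identity $L(V^2)=2V\cdot LV+\sum_k w_k(x)(V(x+s_k)-V(x))^2$. Condition \eqref{eq:dd2} bounds the fluctuation term $L(V^2)-(LV)^2$ linearly in $V$; inserting $LV\le c_1-c_2 V$ from \eqref{eq:dd1} and applying Young's inequality $\alpha V\le\varepsilon V^2+\alpha^2/(4\varepsilon)$ to the resulting linear terms produces a Foster--Lyapunov inequality of the form $L(V^2)\le\bar c_1-\bar c_2 V^2$. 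A second Gr\"onwall step gives boundedness and exponential convergence of $\E[V^2]$. Higher moments follow by iterating: at stage $p$, expand $L(V^p)$ as $pV^{p-1}LV$ plus lower-order jump corrections and use the previously established bounds on $\E[V^j]$ for $j<p$, together with \eqref{eq:dd2}, to close the recursion.

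The main technical obstacle is ensuring that the quadratic-in-$V$ contribution arising from $(LV)^2$ does not overwhelm the negative drift from \eqref{eq:dd1} when passing from $V$ to $V^2$. Condition \eqref{eq:dd2} is precisely what rules this out, by forcing the deviation $L(V^2)-(LV)^2$ to grow only linearly in $V$; verifying that the Young-type absorption preserves a strictly negative leading coefficient, and doing the analogous bookkeeping uniformly at each moment order $p$, is the delicate step. Once the Foster--Lyapunov inequalities for $V^p$ are established, the exponential convergence of all moments follows from Gr\"onwall combined with the $V$-uniform geometric ergodicity from part (i).
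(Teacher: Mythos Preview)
The paper does not prove this statement: Theorem~\ref{th:ergodic} is quoted verbatim as a result from \cite{Briat:13g} and then \emph{applied} to the network \eqref{eq:reacnet}. So there is no ``paper's own proof'' to compare against; your sketch is in fact a reconstruction of the argument underlying the cited reference, and the overall architecture you propose (Meyn--Tweedie/Foster--Lyapunov for exponential ergodicity from \eqref{eq:dd1}; Dynkin plus Gr\"onwall bootstrapped over moment orders for the moment bounds) is exactly the standard route and is what \cite{Briat:13g} does.

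One genuine technical point is worth flagging. You read the left side of \eqref{eq:dd2} literally as $L(V^2)-(LV)^2$ and then plan to ``insert $LV\le c_1-c_2V$''. That step does not close: an upper bound on $LV$ gives no upper bound on $(LV)^2$, and even if equality held one would pick up a term $+c_2^2V^2$ which destroys the negative drift you need for $V^2$. If you look at the paper's own verification of \eqref{eq:dd2} (immediately after the theorem statement), the quantity computed is
\[
k_1+\gamma_1 x_1+4\gamma_2 x_2\;=\;\sum_{k}w_k(x)\bigl(\nu^{*T}s_k\bigr)^2\;=\;L(V^2)(x)-2V(x)\,LV(x),
\]
i.e.\ the carr\'e du champ $\Gamma(V,V)$, \emph{not} $L(V^2)-(LV)^2$. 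With that reading your bootstrap is clean: $L(V^2)=2V\,LV+\Gamma(V,V)\le 2V(c_1-c_2V)+c_3+c_4V$, and a single Young absorption of the linear terms gives $L(V^2)\le \bar c_1-\bar c_2 V^2$ with $\bar c_2$ arbitrarily close to $2c_2$. The iteration to $V^p$ then proceeds exactly as you describe. So the only correction needed is to interpret \eqref{eq:dd2} as a bound on the carr\'e du champ rather than on $L(V^2)-(LV)^2$; the rest of your plan is sound.
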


   Considering then the inequality \eqref{eq:dd1} we have, for the reaction network \eqref{eq:reacnet}, that
  \begin{equation}
  \begin{array}{lcl}
    LV(x)   &=&     k_1\nu_1+\dfrac{b}{2}x_1(x_1-1)(\nu_2-2\nu_1)-\gamma_1x_1\nu_1\\
    &&-\gamma_2x_2\nu_2.
  %               &=&    \dfrac{b}{2}(v_2-2v_1)x_1^2+ (b-\gamma_1)v_1x_1-\gamma_2x_2v_2+k_1v_1.
  \end{array}
  \end{equation}
  Choosing then $\nu=\nu^*:=\begin{bmatrix}
    1 & 2
  \end{bmatrix}^T$, we obtain that
   \begin{equation}
  \begin{array}{lcl}
    LV(x)   &=&     k_1-\gamma_1x_1-2\gamma_2x_2\\
                 &\le&  c_1-c_2V(x)
  \end{array}
  \end{equation}
  where $c_1=k_1$ and $c_2=\min\{\gamma_1,\gamma_2\}$. Considering now the inequality \eqref{eq:dd2} with $\nu=\nu^*$, we get that
  \begin{equation}
    \begin{array}{lcl}
      LV(x)^2-(LV(x))^2&=&k_1+\gamma_1 x_1+4\gamma_2x_2\\
                                        &\le&c_3+c_4 V(x)
    \end{array}
  \end{equation}
  where $c_3=k_1$ and $c_4=\max\{\gamma_1,2\gamma_2\}$. Hence, by virtue of Theorem \ref{th:ergodic}, the conclusion follows.
%
%
%  Therefore, letting $\nu=\begin{bmatrix}
%    1 & 2
%  \end{bmatrix}^T$, $c_1=c_3=k_1$, $c_2=\gamma=\min\{\gamma_1,\gamma_2\}$ and $c_4=\xi=\max\{\gamma_1,2\gamma_2\}$, the conditions of Theorem \ref{th:ergodic} hold and, consequently, the reaction network is ergodic and has all its moments bounded and converging.

\subsection{Proof of Proposition \ref{prop:mu}}\label{ap:mu}

The question that has to be answered is whether for any $\mu$ the set of equations
\begin{equation}\label{eq:dksldksld}
  \begin{array}{rcl}
     k_1+(b-\gamma_1)x_1^*-bx_1^{*2}-bv^*&=&0\\
    -\dfrac{b}{2}x_1^*-\gamma_2\mu+\dfrac{b}{2}x_1^{*2}+\dfrac{b}{2}v^*&=&0
  \end{array}
\end{equation}
has a solution in terms of $k_1$ and $x_1^*$, where $x_1^*$ and $v^*$ are equilibrium values for $x_1$ and $v$. In the following, we define $S(t):=x_1(t)^{2}-x_1(t)+v(t)$ and let $S^*=S^*(k_1)$ be its value at equilibrium that satisfies the first equation of the system \eqref{eq:dksldksld}. In this respect, the above equations can be rewritten as
\begin{equation}\label{eq:dskldjsldsld5678}
  \begin{array}{rcl}
     k_1-\gamma_1x_1^*-bS^*&=&0\\
     -\gamma_2\mu+\dfrac{b}{2}S^*&=&0.%\\
%    -\gamma_2\mu+\dfrac{b}{2}S^*&=&0.
  \end{array}
\end{equation}
Based on the above reformulation, we can clearly see that if we can set $S^*$ to any value by a suitable choice of $k_1$, then any $\mu$ can be achieved. We prove this in what follows.

\textbf{Step 1.} First of all, we have to show that when $k_1=0$, we have that $S^*=0$ and $x_1^*=0$. This can be viewed directly from the results of \cite{Briat:13g} which states that the asymptotic moment bounds for the first-order moment of $V(x)=\nu^T x$ is given by $c_1/c_2$, i.e. $\lim_{t\to\infty}\E[V(X(t))]\le c_1/c_2$, where $c_1,c_2$ are defined in Theorem \ref{th:ergodic}. Choosing $\nu=\begin{bmatrix}
  1 & 0
\end{bmatrix}^T$, suitable $c_1$ and $c_2$ are given by $c_1=k_1$ and $c_2=\gamma_1$. Therefore, $\lim_{t\to\infty}\E[X_1(t)]\le c_1/c_2$. This implies that when $k_1=0$, then $\E[X_1(t)]\to x_1^*=0$ as $t\to\infty$.

%Considering, indeed, the function $f(x)=x_1$, $x_1\in\mathbb{N}$, we have that
%\begin{equation}\label{eq:glab1}
%  Lf(x)=-\gamma_1 x_1-bx_1(x_1-1)\le -\gamma_1 x_1
%\end{equation}
%where $L$ is the infinitesimal generator of the Markov process associated with the reaction network \eqref{eq:reacnet}. From \eqref{eq:glab1}, we then have the inequality
%\begin{equation}
%  \E[X_1(t)]\le\E[X_1(0)]-\gamma_1\int_0^t\E[X_1(s)]ds.
%\end{equation}
%%
%Let us define now the function $\xi(t)$ verifying
%\begin{equation}
% \xi(t)=\xi(0)-\gamma_1\int_0^t\xi(s)ds
%\end{equation}
%and take the Laplace transform of the above expression to get
%\begin{equation}
%  (s+\gamma_1)\hat{\xi}(s)=\xi(0)
%\end{equation}
%where $\hat{\xi}(s)$ is the Laplace transform of $\xi(t)$. We thus obtain
%\begin{equation}
%  \xi(t)=e^{-\gamma_1t}f(0)\stackrel{t\to\infty}{\longrightarrow}0
%\end{equation}
%which implies, by domination, that $\E[X_1(t)]\to0$ as $t$ goes to infinity.

\textbf{Step 2.} We show now that when $k_1$ grows unbounded, then $S^*$ grows unbounded as well. To do so, let us focus on the first equation of \eqref{eq:dskldjsldsld5678}. Two options: either both $x_1^*$ and $S^*$ tend to infinity, or only one of them grows unbounded and the other remains bounded. We show that $S^*$ has to grow unbounded. Let us assume that $S^*=S^*(k_1)$ is uniformly bounded in $k_1$, i.e. there exists $\bar{S}>0$ such that $S^*\in[0,\bar{S}]$ for all $k_1\ge0$. Then, from the first equation of \eqref{eq:dskldjsldsld5678}, we have that $x_1^*=(k_1-bS^*)/\gamma_1$ and thus  $x_1^*\ge\bar{x}_1:=(k_1-b\bar{S})/\gamma_1$ for all $k_1\ge0$. Hence, $x_1^*$ grows unbounded as $k_1$ increases to infinity. From Jensen's inequality, we have that $S^*\ge x_1^*(x_1^*-1)$. Noting then that for the function $f(x):=x(x-1)$, we have that $f(y)\ge f(x)$ for all $y\ge x$, $x\ge1$, we can state that
\begin{equation}
  \bar{x}_1(\bar{x}_1-1)\le x_1^*(x_1^*-1)\le S^*\le\bar{S}
\end{equation}
for all $k_1>0$ such that $\bar{x}_1\ge1$. It is now clear that for any $\bar{S}>0$, there exists $k_1>0$ such that the above inequality is violated since $f(\bar{x}_1)$ can be made arbitrarily large. Therefore, $S^*$ must go to infinity as $k_1$ goes to infinity.

Using finally the continuity assumption of the function $S^*(k_1)$, i.e. Assumption \ref{ass:2}, we can conclude that for any $\mu>0$, there will exist $k_1>0$, such that we have $x_2^*=\mu$.  The proof is complete.

\subsection{Proof of Theorem \ref{th:main}}\label{ap:main}

%we can state that for any value $\zeta\ge0$, there will exist $k_1$ such that $S^*(k_1)=\zeta$. Noting further that $x_2^*=bS^*/(2\gamma_2)$, this means that for any $\mu\ge0$, such that we want $x_2^*=\mu$, it will be possible to find a $k_1$ (or equivalently a suitable control input value) such that $bS^*/(2\gamma_2)=\mu$. The proof is complete.

%
%Letting $\xi(t):=\E[X_1(t)]$ andields that
%\begin{equation}
%  \hat{\xi}(s)(s+\gamma_1)=\xi(0)
%\end{equation}
%and thus that
%\begin{equation}
%  \xi(t)=e^{-\gamma_1t}f(0)\stackrel{t\to\infty}{\longrightarrow}0
%\end{equation}
%or equivalently that

\paragraph{Location of the equilibrium points and local stability results}

We know from Proposition \ref{prop:mu} that for any $\mu>0$, the set of equations \eqref{eq:dksldksld}
%The first step addresses the analysis of the location of equilibrium points (whose existence is ensured from Assumption \ref{ass:1}) which must satisfy the system of equations
%\begin{equation}
%  \begin{array}{rcl}
%     k_cI^*+(b-\gamma_1)x_1^*-bx_1^{*2}-bv^*&=&0\\
%    -\dfrac{b}{2}x_1^*-\gamma_2 x_2^*+\dfrac{b}{2}x_1^{*2}+\dfrac{b}{2}v^*&=&0\\
%    \mu-x_2^*&=&0
%  \end{array}
%\end{equation}
 has solutions in terms of the equilibrium values $x_1^*,x_2^*,I^*$ and $v^*$. Adding two times the second equation to the first one and multiplying the second one by $2/b$, we get that
\begin{equation}
  \begin{array}{rcl}
    k_cI^*-\gamma_1 x_1^*-\gamma_2\mu&=&0\\
    x_1^{*2}-x_1^*+v^*-\dfrac{2\gamma_2\mu}{b}&=&0.
  \end{array}
\end{equation}
%where we have substituted $x_2^*=\mu$.
The first equation immediately leads to $ I^*=(\gamma_1 x_1^*+\gamma_2\mu)/k_c$
%\begin{equation}
%   I^*=\dfrac{\gamma x_1^*+\gamma_2\mu}{k_c}
%\end{equation}
which is positive for all $\mu>0$. This also means that $x_1^*$ is completely characterized by the equation
\begin{equation}
    x_1^{*2}-x_1^*+v^*-\dfrac{2\gamma_2\mu}{b}=0.
\end{equation}
The goal now is to determine the location of the solutions $x_1^*$ to the above equation where $v^*$ is viewed as an unknown parameter, reflecting our complete ignorance on the value $v^*$. We therefore embed the actual unique equilibrium point (from ergodicity and moments convergence) in a set having elements parametrized by $v^*\ge0$. The equation to be solved is quadratic, and it is a straightforward implication of Descartes' rule of signs \cite{Khovanskii:91} that we have three distinct cases:
\begin{itemize}
  \item[1)] If $v^*-2\gamma_2\mu/b<0$, then we have one positive equilibrium point.
  \item[2)] If $v^*-2\gamma_2\mu/b=0$, then we have one equilibrium point at zero, and one which is positive.
  \item[3)] If $v^*-2\gamma_2\mu/b>0$, then we have either 2 complex conjugate equilibrium points, or 2 positive equilibrium points.
\end{itemize}
%Since it is not possible to predict which case will occur, we have to address all of them.
%\begin{itemize}
  \textbf{Case 1:} This case holds whenever $v^*\in\left[0,\frac{2\gamma_2\mu}{b}\right)$
%\begin{equation*}
%  v^*\in\left[0,\dfrac{2\gamma_2\mu}{b}\right)
%\end{equation*}
and the only positive solution for $x_1^*$ is given by $\textstyle{x_1^*=\frac{1}{2}\left(1+\sqrt{\Delta}\right)}$
%\begin{equation}
%  x_1^*=\dfrac{1}{2}\left(1+\sqrt{\Delta}\right)
%\end{equation}
where
\begin{equation}\label{eq:delta}
\Delta=1+4\left(\dfrac{2\gamma_2\mu}{b}-v^*\right)>1.
\end{equation}
The equilibrium point is therefore given by
\begin{equation}
  z^*=\left[\dfrac{1}{2}\left(1+\sqrt{\Delta}\right),\mu, \dfrac{\gamma_2\mu+\gamma_1x_1^*}{k_c}\right].
\end{equation}
The linearized system around this equilibrium point reads
\begin{equation}
  \dot{\tilde{z}}(t)=\begin{bmatrix}
    -\gamma_1-b\sqrt{\Delta} & 0 & k_c\\
    b\sqrt{\Delta}/2 & -\gamma_2 & 0\\
    0 & -1 & 0
  \end{bmatrix}\tilde{x}(t)+\begin{bmatrix}
      -b\\
      b/2\\
      0
    \end{bmatrix}\tilde{v}(t)
\end{equation}
where $\tilde{z}(t):=z(t)-z^*$, $z(t):=\col(x(t),I(t))$ and $\tilde{v}(t):=v(t)-v^*$. The Routh-Hurwitz criterion allows us to derive the stability condition
\begin{equation}\label{eq:condKc1}
  0<k_c<\dfrac{2\gamma_2(\gamma_1+\gamma_2+b\sqrt{\Delta})(\gamma_1+b\sqrt{\Delta})}{b\sqrt{\Delta}}.
\end{equation}
  \textbf{Case 2:} In this case, we have $v^*=2\gamma_2\mu/b$
%\begin{equation}
%  v^*=\dfrac{2\gamma_2\mu}{b}
%\end{equation}
and is rather pathological but should be addressed for completeness. Let us consider first the equilibrium point $x_1^*=0$ giving $z^*=\begin{bmatrix}
    0 & \mu & \gamma_2\mu/k_c
  \end{bmatrix}$.
%\begin{equation}
%  z^*=\begin{bmatrix}
%    0 & \mu & \dfrac{\gamma_2\mu}{k_c}
%  \end{bmatrix}.
%\end{equation}
%
The linearized dynamics of the system around this equilibrium point is given by
\begin{equation}
  \dot{\tilde{z}}(t)=\begin{bmatrix}
    b-\gamma_1 & 0 & k_c\\
    -b/2 & -\gamma_2 & 0\\
    0 & -1 & 0
  \end{bmatrix}\tilde{z}(t)+\begin{bmatrix}
      -b\\
      b/2\\
      0
    \end{bmatrix}\tilde{v}(t).
\end{equation}
Since the determinant of the system matrix is positive, this equilibrium point is unstable. Considering now the equilibrium point $x_1^*=1$ and, thus, $ z^*=\begin{bmatrix}
    1 & \mu & (\gamma_2\mu+\gamma_1)/k_c
  \end{bmatrix}$,
%\begin{equation}
%  z^*=\begin{bmatrix}
%    1 & \mu & \dfrac{\gamma_2\mu+\gamma_1}{k_c}
%  \end{bmatrix},
%\end{equation}
we get the linearized system
\begin{equation}
   \dot{\tilde{z}}(t)=\begin{bmatrix}
    -b-\gamma_1 & 0 & k_c\\
    b/2 & -\gamma_2 & 0\\
    0 & -1 & 0
  \end{bmatrix}\tilde{z}(t)+\begin{bmatrix}
      -b\\
      b/2\\
      0
    \end{bmatrix}\tilde{v}(t).
\end{equation}
which is exponentially stable provided that
\begin{equation}\label{eq:condKc2}
  k_c<\dfrac{2\gamma_2(\gamma_1+\gamma_2+b)(\gamma_1+b)}{b}.
\end{equation}
  \textbf{Case 3:} This case corresponds to when $v^*>2\gamma_2\mu/b$.
%\begin{equation}
%  v^*>\dfrac{2\gamma_2\mu}{b}.
%\end{equation}
%
However, this condition is not sufficient for having positive equilibrium points and we must add the constraint $v^*<2\gamma_2\mu/b+1/4$
%\begin{equation}
%  v^*<\dfrac{2\gamma_2\mu}{b}+\dfrac{1}{4}
%\end{equation}
in order to have a real solutions for $x_1^*$. When the above conditions hold, the equilibrium points are given by
\begin{equation}
  z_\pm^*=\begin{bmatrix}
    \dfrac{1}{2}\left(1\pm\sqrt{\Delta}\right) & \mu & \dfrac{\gamma_2\mu+\gamma_1x_1^*}{k_c}
  \end{bmatrix}
\end{equation}
where  $\Delta$ is defined in \eqref{eq:delta}.
Similarly to as previously, the equilibrium point $z_-^*$ can be shown to be always unstable and the equilibrium point $z^*_+$ exponentially stable provided that
\begin{equation}\label{eq:condKc3}
  k_c<\dfrac{2\gamma_2(\gamma_1+\gamma_2+b\sqrt{\Delta})(\gamma_1+b\sqrt{\Delta})}{b\sqrt{\Delta}}.
\end{equation}
Note that if the discriminant $\Delta$ was equal to 0, the system would be unstable.

\paragraph{Bounds on the variance}

We have thus shown that to have a unique positive locally exponentially stable equilibrium point, we necessarily have an equilibrium variance $v^*$ within the interval $ v^*\in\left(0,2\gamma_2\mu/b+1/4\right].$
%\begin{equation}
%  v^*\in\left(0,\dfrac{2\gamma_2\mu}{b}+\dfrac{1}{4}\right].
%\end{equation}
If $v^*$ is greater than the upper-bound of this interval, the system does not admit any real equilibrium points.

\paragraph{Uniform controller bound}

The last part concerns the derivation of a uniform condition on the gain of the controller $k_c$ such that all the positive equilibrium points that can be locally stable are stable. In order words, we want to unify the conditions \eqref{eq:condKc1}, \eqref{eq:condKc2} and \eqref{eq:condKc3} all together. Noting that these conditions can be condensed to $k_c<f(\sqrt{\Delta})$
%\begin{equation}
%  k_c<f(\sqrt{\Delta})
%\end{equation}
where
\begin{equation}
  f(\zeta):=\dfrac{2\gamma_2(\gamma_1+\gamma_2+b\zeta)(\gamma_1+b\zeta)}{b\zeta}.
\end{equation}
Moreover, since $\mu>0$ can be arbitrarily large (and thus $\Delta$ may take any nonnegative value), the worst case bound for $k_c$ coincides with the minimum of the above function for $\zeta\ge0$. Standard calculations show that the minimizer is  given by $\textstyle \zeta^*=(\gamma_1(\gamma_1+\gamma_2))^{1/2}/b$ and the minimum $f^*:=f(\zeta^*)$ is therefore given by
\begin{equation}
  f^*=2\gamma_2\left(2\gamma_1+\gamma_2+2\sqrt{\gamma_1(\gamma_1+\gamma_2)}\right).
\end{equation}
The proof is complete.

\bibliographystyle{IEEEtranS}
\bibliography{../../../../Lastbib/global,../../../../Lastbib/briat}

\end{document}